\theoremstyle{plain}
\newtheorem{theorem}{Theorem}
\newtheorem{corollary}[theorem]{Corollary}
\newtheorem{lemma}[theorem]{Lemma}
\theoremstyle{definition}
\newtheorem{definition}[theorem]{Definition}
\newtheorem{observation}[theorem]{Observation}
\newcommand{\br}[1]{\langle#1\rangle}
\newcommand{\C}{{\mathcal{C}}}
\newcommand{\Cat}{{\mathbf{Cat}_*}}
\def\d{\displaystyle}
\newcommand{\D}{\mathcal{D}}
\newcommand{\id}{{\mathop{\textnormal{id}}\nolimits}}
\long\def\ignore#1\endignore{}
\newcommand{\Mult}{{\mathbf{Mult}}}
\newcommand{\Multstar}{\Mult_*}
\newcommand{\N}{{\mathbb{N}}}
\newcommand{\Ob}{{\mathop{\textnormal{Ob}}}}
\newcommand{\op}{{\textnormal{op}}}
\newcommand{\Perm}{\mathbf{Perm}}
\newcommand{\sm}{\wedge}
\newcommand{\Strict}{{\mathbf{Strict}}}
\def\u#1{\underline{#1}}
\begin{document}

\title{Multiplicativity in Mandell's Inverse $K$-Theory}

\author{A.\ D.\ Elmendorf}

\address{Department of Mathematics\\
Purdue University Northwest\\
 Hammond, IN 46323}
 \thanks{The author was supported in part by a Simons Foundation
 Collaboration Grant for Mathematicians}

\email{adelmend@pnw.edu}

\date{\today}

\begin{abstract}
We show that Mandell's inverse $K$-theory functor from $\Gamma$-categories to permutative categories preserves multiplicative structure.  
This is a first step towards an equivariant generalization that would be inverse to the construction of Bohmann
and Osorno.
\end{abstract}

\maketitle

\section{Introduction}
Segal showed that symmetric monoidal categories give rise to spectra in \cite{Seg}, and May gave a simpler construction in the case of permutative categories in \cite{May}.  Both constructions deserve to be called algebraic $K$-theory constructions.
Mandell and the author showed in \cite{EM1} that a modification of May's construction actually preserves multiplicative structure,
and Bohmann and Osorno \cite{BO} used this multiplicativity to construct an equivariant version, making crucial use of the work
of Guillou and May \cite{GM} characterizing equivariant spectra as presheaves of spectra over a spectral version of the Burnside
category.  

All the spectra arising from the Segal-May construction are connective, and Thomason showed in \cite{Thom1} that all connective
spectra arise in this fashion.  His construction was quite obscure, however, and Mandell gave a much more comprehensible 
one in \cite{Man}.  The aim of this paper is to show that the main step in Mandell's construction preserves multiplicative structure, and therefore can serve as a first step towards giving an inverse construction to the equivariant generalization of Bohmann and Osorno.  The conjecture is that all connective equivariant spectra, that is, those all of whose fixed-point spectra are connective, arise from the Bohmann-Osorno construction.  

The first part of Mandell's construction is simply to use Thomason's equivalence between spaces and categories from \cite{Thom2} to construct a $\Gamma$-category from a $\Gamma$-space, by using levelwise double subdivision and categorification.  The substantial portion of Mandell's construction is the passage from $\Gamma$-categories to permutative categories, and it is this part of his construction that we show preserves multiplicative structure.
The proof relies on a factorization of Mandell's construction as a composite of three functors, all of which preserve multiplicative structure.  The first starts with a $\Gamma$-category and produces a multifunctor with source a multicategory derived from the natural numbers, and with target a
multicategory of categories.  The second step is a wreath product construction that starts from such a multifunctor and 
produces a single multicategory.  The third step is the left adjoint to the forgetful functor from permutative categories to multicategories; it was somewhat surprising to the author to find that this also preserves multiplicative structure, but the proof is fairly simple once one thinks to look for it.

It is a pleasure to acknowledge stimulating conversations about this material with Bert Guillou, Peter Bonventre, and especially Ang\'elica
Osorno.  It seems like a good idea to acknowledge Anna Marie Bohmann on general inspirational principles.  None of them
are responsible for the errors and omissions that may occur in this paper.

\section{Outline and statement of results}
We begin with a $\Gamma$-category $X$, and wish to end with a permutative category using a construction that preserves
multiplicative structure, as captured by multicategory structure.  To set terminology and notation, let $\Gamma^\op$ be the category with objects the based sets
$\u n=\{0,1,\dots,n\}$  with basepoint 0 for $n\ge0$, and morphisms the based functions.  
Let $\Cat$ be the category of small based categories, that is, small categories with a select base object.
Then a $\Gamma$-category
is a functor $X:\Gamma^\op\to\Cat$ for which $X(0)=*$, a category with one object and one morphism.

Next, let $\N$ temporarily denote the permutative category whose objects are the unbased sets 
$[n]=\{1,2,\dots,n\}$ for $n\ge0$, morphisms all functions, and monoidal product given by $[m]\oplus[n]:=[m+n]$,
where we use the canonical bijection $[m]\amalg[n]\cong[m+n]$ to make this into a bifunctor.  Being a permutative
category, so is its opposite category, and our permanent use of $\N$ will be as the underlying multicategory of the
permutative category given by this opposite category.  Explicitly, then, an $r$-morphism in $\N$ from $([n_1],\dots,[n_r])$
to $[m]$ consists of an ordinary function $[m]\to[n_1]\amalg\cdots\amalg[n_r]$.  It is important to note that $\N$ is actually
a based multicategory: there is a canonical multifunctor 
from the terminal multicategory $*$ with one object and one morphism of each arity to our multicategory $\N$,
picking out the single object $[0]=\emptyset$ and making
it into a commutative monoid in the multicategorical sense: this just means that the 
basepoint-defining multifunctor $*\to\N$ lands on
the object $[0]$.  The terminology about commutative monoids arises from the fact that $*$ parametrizes commutative
monoids in any symmetric monoidal category.

We now define a multicategory structure on $\Cat$, or more precisely its opposite category.  
Note that a based category is the same thing as a category with a specified 
functor $*\to\C$ from any terminal category with one object and one morphism.
Since $\Cat$ is a symmetric
monoidal category using its categorical product, which is the cartesian product of categories, so is its opposite category.
We will use $\Cat^\op$ to denote ``the'' underlying multicategory of this opposite category, where by ``the'' underlying
multicategory we mean any choice of underlying multicategory: all of them are canonically isomorphic.  Explicitly,
an $r$-morphism in $\Cat^\op$ from $(\D_1,\dots,\D_r)$ to $\C$ consists of a based functor
\[
\C\to\D_1\times\cdots\times\D_r.
\]
We note that this multicategory is itself based, with base object any one-point, one-morphism category.

Since both $\N$ and $\Cat^\op$ are based multicategories, we can look at 
\[
\Mult_*(\N,\Cat^\op),
\] 
the collection of
based multifunctors from $\N$ to $\Cat^\op$.  The first step in our construction takes a $\Gamma$-category $X$
and produces a multifunctor $AX\in\Mult_*(\N,\Cat^\op)$; we use the notation $A$ to follow Mandell's notation to
some extent, and the construction is given in Section \ref{G2M}.

\begin{theorem}\label{step1}
There are multicategory structures on $\Gamma$-$\Cat$ and $\Mult_*(\N,\Cat^\op)$ for which the construction
\[
A:\Gamma\text{-}\Cat\to\Mult_*(\N,\Cat^\op)
\]
is a multifunctor.
\end{theorem}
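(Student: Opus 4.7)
The plan is to prove Theorem~\ref{step1} in three stages: first, specify the multicategory structures on both $\Gamma$-$\Cat$ and $\Mult_*(\N,\Cat^\op)$; second, describe the action of $A$ on multimorphisms; and third, verify the multifunctor axioms.

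For $\Gamma$-$\Cat$ I would use the multicategory structure induced by Day convolution against the smash product $\sma$ on $\Gamma^\op$, so that an $r$-morphism $(X_1,\ldots,X_r)\to Y$ is specified by a family of based functors
\[
\phi_{\u{n_1},\ldots,\u{n_r}}:X_1(\u{n_1})\times\cdots\times X_r(\u{n_r})\to Y(\u{n_1}\sma\cdots\sma\u{n_r}),
\]
natural in each coordinate and preserving basepoints. For $\Mult_*(\N,\Cat^\op)$ I take the canonical multicategory structure on the collection of based multifunctors between based multicategories, in which an $r$-morphism $(F_1,\ldots,F_r)\to G$ is assembled from $r$-morphisms in $\Cat^\op$ indexed by objects of $\N$ and subject to the appropriate multinaturality condition with respect to the multimorphisms of $\N$.

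The functor $A$ acts on objects via $AX([n])=X(\u{n})$, and on an $r$-morphism $f:[m]\to[n_1]\amalg\cdots\amalg[n_r]$ in $\N$ by producing the based functor $X(\u{m})\to X(\u{n_1})\times\cdots\times X(\u{n_r})$ whose $i$-th component is $X(f_i)$ for $f_i:\u{m}\to\u{n_i}$ the based extension of $f$ obtained by sending anything outside $f^{-1}[n_i]$ to the basepoint. On an $r$-morphism $\phi:(X_1,\ldots,X_r)\to Y$ in $\Gamma$-$\Cat$, $A\phi$ is defined by assembling the levelwise pieces of $\phi$ with the functorial actions of the $X_j$'s on the based functions in $\Gamma^\op$ derived from multimorphisms of $\N$.

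The verification then splits into three routine but bookkeeping-heavy pieces: (i) that $AX$ is a multifunctor, which follows from the functoriality of $X$ together with the compatibility of projections onto summands with composition in $\N$; (ii) that $A\phi$ satisfies the multinaturality condition over $\N$, which uses the naturality of $\phi$ in each coordinate of $\Gamma^\op$; and (iii) that $A$ preserves composition, identities, and symmetric group actions on multimorphisms. I expect (iii) to be the main obstacle: matching the Day-convolution composition in $\Gamma$-$\Cat$, which packages the smash-product combinatorics of $\Gamma^\op$, with the composition of multinatural transformations in $\Mult_*(\N,\Cat^\op)$, which is governed by the disjoint-union combinatorics of $\N$, requires a careful diagram chase. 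The key technical input is the universal property of smash products of pointed finite sets, expressing any based function into such a product via its coordinate projections; this is precisely what lets $\N$'s disjoint-union structure mediate between the smash structure on $\Gamma^\op$ and the product structure of $\Cat^\op$.
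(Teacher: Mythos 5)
There is a genuine gap, and it sits exactly where the paper flags the main subtlety: your choice of multicategory structure on $\Mult_*(\N,\Cat^\op)$. You take ``the canonical multicategory structure \dots\ in which an $r$-morphism $(F_1,\dots,F_r)\to G$ is assembled from $r$-morphisms in $\Cat^\op$ indexed by objects of $\N$,'' i.e.\ the pointwise/enriched structure where one gives, for each $[n]$, a functor $G[n]\to F_1[n]\times\cdots\times F_r[n]$ multinatural in $[n]$. The paper explicitly warns that this is \emph{not} the structure that works. The reason is an indexing and variance mismatch with your (correct) Day-convolution structure on $\Gamma$-$\Cat$: a 2-morphism $(X_1,X_2)\to Y$ there is a family of based functors $X_1(\u{m})\sm X_2(\u{n})\to Y(\u{m}\sm\u{n})\cong Y(\u{mn})$, which relates the values of $X_1$ and $X_2$ at $m$ and $n$ to the value of $Y$ at the \emph{product} $mn$, and points in the ``wrong'' direction to produce functors $Y(n)\to X_1(n)\times X_2(n)$ for a single $n$. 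Your sketch of $A\phi$ (``assembling the levelwise pieces of $\phi$ with the functorial actions of the $X_j$'s'') never confronts this; there is simply no construction of the required pointwise $r$-morphisms from the Day-convolution data, so step (ii)--(iii) of your verification cannot be carried out as stated.

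What is actually needed is a \emph{twisted} multicategory structure on $\Mult_*(\N,\Cat^\op)$ built from ring structures on both $\N$ and $\Cat^\op$: multiplication $\lambda([m],[n])=[m\cdot n]$ (lexicographic identification) on $\N$, and the smash product $\lambda(\C,\D)=\C\sm\D$ on $\Cat^\op$. An $r$-morphism $(F_1,\dots,F_r)\to G$ is then a based $r$-linear transformation consisting of based functors $F_1m_1\sm\cdots\sm F_rm_r\to G(m_1\cdots m_r)$, multifunctorial in each $m_i$ and bilinear in each pair of variables. With that target structure, the 2-morphism $\lambda:(AX,AY)\to A(X\sm Y)$ is exactly the universal transformation exhibiting $X\sm Y$ as a left Kan extension (restricted along $\N\to\Gamma^\op$), the 0-morphism $\eta$ is the identity on $S^0=B(1)$, and the coherence checks reduce, via Lemma~\ref{unbased}, to four diagrams that follow from the universal property of the Kan extension --- so the ``bookkeeping-heavy'' composition check you anticipate largely evaporates once the right target structure is in place. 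Two smaller points: in your description of the source structure, the family should factor through the smash $X_1(\u{n_1})\sm\cdots\sm X_r(\u{n_r})$ (i.e.\ kill tuples with a basepoint coordinate), not merely preserve the basepoint of the cartesian product; and you should verify that $A$ is functorial on 1-morphisms (multinaturality of $Aq$ over $\N$), which is the easy part.
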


In fact, the multicategory structure on $\Gamma$-$\Cat$ arises from the
symmetric monoidal structure given by the Day convolution induced by the smash product
of based categories and the smash product $\Gamma^\op\times\Gamma^\op\to\Gamma^\op$.  However, the multicategory
structure on $\Mult_*(\N,\Cat^\op)$ is \emph{not} the one given by the enrichment of $\Mult_*$ over itself; we explain
in Section \ref{G2M}.

\ignore
Here is the construction of $AX$ given a $\Gamma$-category $X$.  We need to first assign a based category $AX[n]$ to any
object $[n]\in\N$, and we just use the category $X(n)$.  Next, given a morphism $f:([n_1],\dots,[n_r])\to[m]$ in $\N$,
we need an associated morphism $(AX[n_1],\dots,AX[n_r])\to AX[m]$ in $\Cat^\op$.  But the morphism $f$ consists
just of a function
\[
f:[m]\to[n_1]\amalg\cdots\amalg[n_r],
\]
to which we can attach a disjoint basepoint on each side, considered as new elements 0, and obtain a map of based sets
\[
f_+:\u m\to\u n_1\vee\cdots\vee\u n_r.
\]
For any index $i$ with $1\le i\le r$, we can then collapse all the wedge summands except $\u n_i$ to the basepoint,
producing a map $f_i:\u m\to\u n_i$ in $\Gamma^\op$.  Since $X$ is a $\Gamma$-category, this induces a functor 
$X(f_i):X(m)\to X(n_i)$ which we use as the $i$'th coordinate map to the product of the $X(n_i)$'s, giving us
a functor
\[
AX(f):X(m)\to X(n_1)\times\cdots\times X(n_r);
\]
this functor is based since all the components are induced by maps in a $\Gamma$-category, which must be based functors.
\endignore

The next step is a wreath product construction that produces a based multicategory from a 
(based) multifunctor
into $\Cat^\op$.  The construction will be given in Section \ref{MWP}.
\ignore
Suppose given a based multicategory $M$ and a based multifunctor $F:M\to\Cat^\op$.  We define a based multicategory
$M\wr F$ as follows.  The objects of $M\wr F$ are given as
\[
\Ob(M\wr F):=\coprod_{a\in M}\Ob(Fa),
\]
with the base object given by the unique object of the terminal category to which the base object of $M$ gets mapped.
Given a source string $(x_1,\dots,x_r)$ with $x_i\in Fa_i$ and a target object $y\in Fb$, a morphism 
$(x_1,\dots,x_r)\to y$ in $M\wr F$ consists of 
\begin{enumerate}
\item
an $r$-morphism $f:(a_1,\dots,a_r)\to b$ in $M$, which induces an $r$-morphism $Ff:(Fa_1,\dots,Fa_r)\to Fb$
in $\Cat^\op$, in other words, a functor $Ff:Fb\to Fa_1\times\cdots\times Fa_r$,
\item
an $r$-tuple of morphisms $\psi_i\in Fa_i$ assembling to 
\[
\prod_{i=1}^r\psi_i:(x_1,\dots,x_r)\to(Ff)(y)
\]
as a morphism in $\prod_{i=1}^r Fa_i$.
\end{enumerate}
We now assert
\endignore
We will show that
\begin{theorem}\label{step2}
The wreath product construction gives a multifunctor
\[
\mathord{\textnormal{Wr}}:\Mult_*(\N,\Cat^\op)\to\Mult_*.
\]
\end{theorem}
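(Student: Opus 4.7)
The plan is to verify, in order, three things: that for each based multifunctor $F:\N\to\Cat^\op$ the wreath product $\N\wr F$ is a based multicategory; that $\mathrm{Wr}$ admits a sensible extension to $k$-morphisms in $\Multstar(\N,\Cat^\op)$; and that this extension satisfies the axioms for a multifunctor to $\Multstar$.

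For the first step, I work from the sketched definition of $\N\wr F$. Composition of an $r$-morphism $(f,\psi_1,\dots,\psi_r):((a_i,x_i))_{i=1}^r\to(b,y)$ with a family of morphisms into the $(a_i,x_i)$ is defined by composing the $\N$-parts, applying $F$ to the composite to obtain a new functor in $\Cat^\op$, and then pairing the functorial images of the fiber morphisms with the $\psi_i$. Associativity and unit laws follow from the corresponding axioms in $\N$ together with the functoriality of $F$ on composites; equivariance on inputs is inherited from the equivariance of $F$ as a multifunctor; and the basepoint of $\N\wr F$ is the unique object of $F(\br 0)=*$, which is well-defined because $F$ is based.

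For the second step, a $k$-morphism $\Phi:(F_1,\dots,F_k)\to G$ in $\Multstar(\N,\Cat^\op)$, under the structure specified in Section \ref{G2M}, provides for each tuple $(a_1,\dots,a_k)$ in $\N^k$ a based ``assembling'' functor into $G$ evaluated at a designated combination of the $a_i$. I assign to $\Phi$ the $k$-morphism $\mathrm{Wr}(\Phi):(\N\wr F_1,\dots,\N\wr F_k)\to \N\wr G$ in $\Multstar$ whose value on an object tuple $((a_j,x_j))_{j=1}^k$ has base coordinate the designated combination of the $a_j$ and fiber coordinate the image of the $x_j$ under the assembling functor; on multimorphisms the recipe is analogous, combining the $\N$-part of the wreath product data in each slot via $\Phi$ with the corresponding functor images on the fiber side.

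For the third step, the multifunctor axioms split into a base piece involving only $\N$ and a fiber piece involving only $\Cat^\op$; each follows from the corresponding axioms for $\Phi$. I expect the main obstacle to be the careful bookkeeping demanded by the particular (non-self-enriched) multicategory structure on $\Multstar(\N,\Cat^\op)$ from Section \ref{G2M}. That structure is chosen precisely so that pairings of its $k$-morphisms with the wreath product yield $k$-morphisms in $\Multstar$, so once it is in hand the verification is essentially diagram-chasing. The subtlest checks should be equivariance under $\Sigma_k$ permuting the inputs of $\Phi$ and the requirement that $\mathrm{Wr}$ sends basepoint multifunctors to the basepoint of $\Multstar$; each reduces to the corresponding property of $\Phi$ together with the based nature of the wreath product.
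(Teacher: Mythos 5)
Your overall strategy coincides with the paper's: verify that $\N\wr F$ is a based multicategory, define $\mathrm{Wr}(\Phi)$ on objects by pairing the product of the base coordinates with the assembling functor applied to the fiber coordinates, and then check the axioms; your object-level formula agrees with the paper's $(\N\wr\phi)((x,m),(y,n))=(\phi_{m,n}(x,y),m\cdot n)$.

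The gap is that the heart of the proof is exactly what you dispose of with ``the recipe is analogous'' and ``essentially diagram-chasing,'' and the one substantive claim you make about that part --- that the axioms split into a base piece involving only $\N$ and a fiber piece involving only $\Cat^\op$ --- is not accurate: the essential checks are mixed. Given an $r$-morphism $(f,\br{\psi_i}_{i=1}^r)$ in $\N\wr F_1$ with the other input held at $(y,n)$, the induced morphism in $\N\wr G$ must have $\N$-part $f\times[n]$ (using the ring structure on $\N$), and its fiber part only makes sense because the compatibility square built into the twisted multicategory structure on $\Multstar(\N,\Cat^\op)$ yields $G(f\times[n])(\phi_{m',n}(x',y))=\br{\phi_{m_i,n}((F_1f)(x')_i,y)}_{i=1}^r$; only after this identification can bifunctoriality of the $\phi_{m_i,n}$ be used to insert the $\psi_i$ and produce morphisms with the correct codomain. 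Similarly, the multilinearity of $\N\wr\phi$ is a genuinely mixed verification: one composes the morphisms induced in the two possible orders and checks, using the bilinearity diagram for $\phi$ together with the identification above, that both composites have components $\phi_{m_i,n_j}(\psi_i,\xi_j)$, hence coincide. These are precisely the computations the paper carries out, and your sketch neither isolates them nor explains why they hold; asserting that the structure on $\Multstar(\N,\Cat^\op)$ ``is chosen precisely so that'' the pairing works is circular when offered as a proof. (A minor point: the theorem asserts only a multifunctor into $\Multstar$, so no basepoint-preservation condition on $\mathrm{Wr}$ itself needs to be checked.)
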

Here the multicategory structure on the target $\Mult_*$ is the one underlying the symmetric monoidal structure from
\cite{EM2}.

At this point in the construction we forget about the based structure of the objects of $\Multstar$, that is,
we apply the forgetful functor $\Multstar\to\Mult$.  This is a multifunctor since based multilinear maps of based 
multicategories are in particular multilinear maps of their underlying unbased multicategories.  Now
the third and final step in our factorization of Mandell's construction is the left adjoint to the forgetful functor from 
permutative categories to multicategories,
which we denote by $F$ and describe in Section \ref{FPC}.
\ignore
Given a permutative category $\C$, its underlying multicategory
has the same objects, and an $r$-morphism $(a_1,\dots,a_r)\to b$ consists of a morphism in $\C$
\[
f:a_1\oplus\cdots\oplus a_r\to b.
\]
If $r=0$, we consider an empty sum to be given by the identity object of the permutative category.

The left adjoint to this construction is as follows.  Given a multicategory $M$, we construct a permutative category
$FM$ by first specifying its objects to be
\[
\Ob(FM):=\coprod_{n=0}^\infty\left(\Ob M\right)^n,
\]
so the objects of $FM$ consists of lists of objects of $M$, including an empty list, which gives the identity object.
Given a source string $\br{x_i}_{i=1}^r=(x_1,\dots,x_r)$ and a target string $\br{y_j}_{j=1}^s=(y_1,\dots,y_s)$, we define
a morphism $\br{x_i}_{i=1}^r\to\br{y_j}_{j=1}^s$ to consist of a function $\phi:\{1,\dots,r\}\to\{1,\dots,s\}$ and, for each $j$ with $1\le j\le s$,
a morphism $\psi_j:\br{x_i}_{\phi(i)=j}\to y_j$ in $M$, where $\br{x_i}_{\phi(i)=j}$ is the tuple of entries in $\br{x_i}_{i=1}^r$ whose indices get mapped to $j$.  If there are no such $i$, then $\br{x_i}_{\phi(i)=j}$ is the empty list, and $\psi_j$ is a 0-morphism in $M$.  The permutative structure is
given by concatenation of lists.  
\endignore
Let $\Perm$ denote the multicategory of permutative categories.  
Our third multiplicativity theorem is then

\begin{theorem}\label{step3}
The functor $F:\Mult\to\Perm$ extends to a multifunctor; i.e., it preserves multiplicative structure.
\end{theorem}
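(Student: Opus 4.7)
My plan is to construct the action of $F$ on multilinear maps by hand, using the explicit description of $FM$ as the free permutative category of finite lists of objects of $M$, and then to verify the multifunctor axioms. A conceptually cleaner route would be to promote the adjunction $F \dashv U$ to a multi-adjunction between $\Mult$ and $\Perm$, but this encapsulates the same combinatorial content less transparently. Throughout, I fix the lexicographic ordering convention on products of lists, with the first coordinate varying most slowly. Given a $k$-linear map $f \colon (M_1,\dots,M_k) \to N$ in $\Mult$, I define $Ff \colon (FM_1,\dots,FM_k) \to FN$ on objects by sending a $k$-tuple of lists $(\br{x^\ell_i}_{i=1}^{m_\ell})_{\ell=1}^k$ to the lex-ordered list
\[
\bigl(f(x^1_{i_1},\dots,x^k_{i_k})\bigr)_{(i_1,\dots,i_k) \in [m_1]\times\cdots\times[m_k]}.
\]
A morphism in $FM_\ell$ is determined by a function $\phi_\ell \colon [m_\ell]\to[n_\ell]$ together with multimorphisms $\psi^\ell_j \colon \br{x^\ell_i}_{\phi_\ell(i)=j}\to y^\ell_j$ in $M_\ell$; I define $Ff$ on a $k$-tuple of such morphisms to be the morphism in $FN$ whose index function is the product $\phi_1\times\cdots\times\phi_k$ and whose fiber multimorphism over target index $(j_1,\dots,j_k)$ is $f(\psi^1_{j_1},\dots,\psi^k_{j_k})$. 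This is well-typed because, in lex order, the preimage of $(j_1,\dots,j_k)$ under $\phi_1\times\cdots\times\phi_k$ is exactly the product of the individual preimages.

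Verification then splits into four bookkeeping-heavy but mechanical steps: functoriality of each $Ff$, using product-of-functions and the multivariable functoriality of $f$; strict monoidality of $Ff$ in each slot, using that the lex-order product list for a concatenation $\br{x^\ell}\cdot\br{x'^\ell}$ is the concatenation of the two product lists and that an empty slot forces an empty product; equivariance under symmetries of the source tuple $(M_1,\dots,M_k)$, which induces a block permutation of index tuples and hence a corresponding permutation of the product list in $FN$; and finally the compatibility of the assignment $f \mapsto Ff$ with composition and symmetries of multilinear maps, which is a matter of unwinding the definitions.

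The principal obstacle is the interchange compatibility required of a multilinear map in $\Perm$: expanding $Ff(\br{x^1}\cdot\br{x'^1},\dots,\br{x^k}\cdot\br{x'^k})$ by iterated use of strict monoidality produces the same $2^k$ terms in any order of expansion, but the orderings of the resulting concatenations differ by a prescribed block permutation, and this permutation must coincide with the canonical one built into the multicategory structure on $\Perm$ coming from \cite{EM2}. The lexicographic convention forces these two permutations to agree on the nose; once this is observed, the interchange compatibility and the remaining axioms all come out as routine verifications.
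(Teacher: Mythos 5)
There is a genuine gap at the step you call ``strict monoidality of $Ff$ in each slot.'' With a fixed lexicographic ordering of the product index set, concatenation can be strictly preserved in at most one slot (the most significant one). Concretely, for $k=2$ with your convention (first coordinate slowest), $Ff\bigl((x_1,x_2),(y_1)\odot(y_2)\bigr)$ is the list $\bigl(f(x_1,y_1),f(x_1,y_2),f(x_2,y_1),f(x_2,y_2)\bigr)$, whereas $Ff\bigl((x_1,x_2),(y_1)\bigr)\odot Ff\bigl((x_1,x_2),(y_2)\bigr)$ is $\bigl(f(x_1,y_1),f(x_2,y_1),f(x_1,y_2),f(x_2,y_2)\bigr)$; these differ by a nontrivial shuffle, so no choice of ordering convention makes all slots strict, and your closing claim that ``the lexicographic convention forces these two permutations to agree on the nose'' is not the correct resolution. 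What is actually required is the full data of a $k$-linear map of permutative categories in the sense of \cite{EM1}, Definition 3.2: distributivity transformations $\delta_i$ for each slot, which here must be taken to be the evident block-shuffle symmetries of $FN$ (identity only in the distinguished slot), followed by verification of the coherence axioms, including the compatibility between $\delta_i$ and $\delta_j$ and with the symmetries. This is exactly the content the paper supplies in its binary comparison map, where $\delta_2$ is the identity and $\delta_1$ is a specified shuffle. A secondary gap: for a multilinear map of multicategories, the expression $f(\psi^1_{j_1},\dots,\psi^k_{j_k})$ is not primitive data; $f$ is only a multifunctor in each variable separately, so this morphism must be defined as an iterated composite of one-variable applications, well defined (up to the prescribed source permutation) only because of the interchange condition on $f$ -- the analogue of the paper's ``bilinearity rectangle'' -- and that convention-dependence then feeds into your functoriality and composition checks, which you leave as unwound definitions.

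It is also worth noting that your route differs from the paper's. You attack all $k$-linear maps and the multifunctor axioms (composition, $\Sigma_k$-equivariance) directly, whereas the paper observes that the multicategory structure on $\Mult$ underlies the symmetric monoidal structure given by the Boardman--Vogt tensor product and invokes Lemma \ref{unbased}: it then suffices to produce a single bilinear map $\lambda\colon (FM,FN)\to F(M\otimes N)$ together with a unit $0$-morphism and to check four diagrams, after which the action on arbitrary $k$-morphisms and its compatibility with composition and permutations come for free. If you repair the gap above by introducing the shuffle distributivity maps and checking the coherence of \cite{EM1}, Definition 3.2 (or \cite{EM2}), your direct approach can be completed, but the reduction through Lemma \ref{unbased} removes precisely the bookkeeping your plan defers.
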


The following observation is our factorization of Mandell's construction.

\begin{observation}
Given a $\Gamma$-category $X$, Mandell's construction $PX$ in \cite{Man} of a permutative category coincides with
the construction $F(\N\wr AX)$.
\end{observation}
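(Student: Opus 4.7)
The plan is to prove the observation by unpacking both sides as permutative categories and matching them term by term. Since Mandell's $PX$ is defined in \cite{Man} by an explicit combinatorial recipe, and $F(\N\wr AX)$ is assembled from the three constructions of Sections \ref{G2M}, \ref{MWP}, and \ref{FPC}, the argument reduces to a bookkeeping comparison at the levels of objects, morphisms, composition, identities, and the strict symmetric monoidal structure. No substantive new input is needed; this is a matter of identifying two equivalent presentations of the same combinatorial data.

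First I would unfold $F(\N\wr AX)$. An object of $\N\wr AX$ is a pair $([n],x)$ with $x$ an object of $AX[n]=X(n)$, so an object of $F(\N\wr AX)$ is a finite list $\br{([n_i],x_i)}_{i=1}^r$ of such pairs, with the empty list as the monoidal unit and concatenation as the monoidal product. This matches the objects and permutative structure of $PX$ on the nose. A morphism in $F(\N\wr AX)$ between two such lists consists of a function $\phi\colon\{1,\dots,r\}\to\{1,\dots,s\}$ together with, for each $j$, a multimorphism in $\N\wr AX$ from $\br{([n_i],x_i)}_{\phi(i)=j}$ to $([m_j],y_j)$. Unpacking further, each such multimorphism records a function $f_j\colon\u m_j\to\bigvee_{\phi(i)=j}\u n_i$ in $\Gamma^\op$ together with morphisms $\psi_i\colon x_i\to X((f_j)_i)(y_j)$ in the fibres $X(n_i)$, where $(f_j)_i$ collapses all wedge summands except the $i$th. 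This is precisely the packaging Mandell uses to describe a morphism of $PX$.

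The step I expect to require the most care is verifying that composition agrees. Composition in $F(\N\wr AX)$ combines three ingredients: composition of underlying maps in $\N$ (equivalently, in $\Gamma^\op$), the functoriality of $X$ applied to the resulting composites, and vertical composition of the fibre morphisms $\psi_i$ in the categories $X(n_i)$. Mandell's composition in $PX$ uses the same three ingredients, so the verification reduces to checking that the two packagings produce identical composites; this is a direct but mildly involved combinatorial check, made easier by the fact that each ingredient appears in exactly one stage of our factorization. Identities and the symmetry are easier: both sides inherit them from list concatenation and from the reindexing action on source tuples. Once these identifications are made, the observation follows.
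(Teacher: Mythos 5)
Your unpacking of $F(\N\wr AX)$ --- objects as finite lists of pairs $([n_i],x_i)$ with $x_i\in X(n_i)$, and morphisms as an index function $\phi$ together with, for each $j$, a multimorphism of $\N\wr AX$ consisting of a morphism in $\N$ plus fibrewise morphisms $\psi_i\colon x_i\to X((f_j)_i)(y_j)$ --- is exactly the justification the paper intends: the Observation is stated there without any written proof, so your definitional comparison (including the routine check that composition, identities, and the concatenation symmetry agree) is essentially the same, and indeed the only, approach. The single point to phrase more precisely when matching against Mandell's definition is that each $f_j$ is an unbased function $[m_j]\to\coprod_{\phi(i)=j}[n_i]$, equivalently a based map $\u{m_j}\to\bigvee_{\phi(i)=j}\u{n_i}$ sending only $0$ to the basepoint, rather than an arbitrary morphism of $\Gamma^\op$ as your wording suggests.
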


Since all three steps in this construction preserve multiplicative structure, we may conclude

\begin{corollary}
Mandell's construction $P$ defines a multifunctor from $\Gamma$-$\Cat$ to $\Perm$ inverse to the $K$-theory construction.
\end{corollary}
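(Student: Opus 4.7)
The approach is to apply the three multiplicativity theorems in sequence, using the factorization of $P$ recorded in the preceding Observation. Explicitly, I would write
\[
P \;=\; F \circ U \circ \mathord{\textnormal{Wr}} \circ A,
\]
where $A$ is the $\Gamma$-category-to-multifunctor construction of Theorem \ref{step1}, $\mathord{\textnormal{Wr}}$ is the wreath product of Theorem \ref{step2}, $U : \Mult_* \to \Mult$ is the forgetful functor described in the paragraph immediately before Theorem \ref{step3}, and $F : \Mult \to \Perm$ is the left adjoint of Theorem \ref{step3}. By the Observation, this composite takes a $\Gamma$-category $X$ to $F(\N \wr AX) = PX$ as Mandell defines it.

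Next I would argue that each of the four factors is a multifunctor. For $A$, $\mathord{\textnormal{Wr}}$, and $F$ this is precisely the content of Theorems \ref{step1}, \ref{step2}, and \ref{step3}. For the forgetful functor $U$, I would simply recall the comment preceding Theorem \ref{step3}: a based multilinear map of based multicategories is in particular an unbased multilinear map of the underlying unbased multicategories, so $U$ is a multifunctor by direct inspection (it is the identity on objects and an inclusion on multimorphism sets, manifestly respecting composition and units). Since composites of multifunctors are multifunctors, $P$ is itself a multifunctor $\Gamma\text{-}\Cat \to \Perm$.

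The remaining assertion --- that $P$ is inverse to the $K$-theory construction --- is not new; it is the main theorem of Mandell's paper \cite{Man}, regarded as a statement about ordinary functors rather than multifunctors. The content of the corollary is therefore simply that Mandell's inverse equivalence respects multiplicative structure, and this is exactly what the above factorization delivers. I do not anticipate any substantive obstacle: the argument is entirely formal bookkeeping on top of Theorems \ref{step1}--\ref{step3} and the Observation, and all the genuine work has already been carried out in establishing those three results.
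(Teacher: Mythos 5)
Your proposal is correct and follows exactly the paper's own argument: compose the factorization $P = F\circ U\circ\mathord{\textnormal{Wr}}\circ A$ from the Observation, invoke Theorems \ref{step1}--\ref{step3} together with the remark that the forgetful functor $\Multstar\to\Mult$ is a multifunctor, and cite Mandell \cite{Man} for the statement that $P$ is inverse to $K$-theory. No substantive differences from the paper's proof.
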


The fact that $P$ is inverse to the $K$-theory construction is one of Mandell's results from \cite{Man}.

\section{Multifunctors from symmetric monoidal categories}
This section is devoted to a lemma that we will exploit in the proofs of the above theorems.  
We refer to the unit $e$ in a symmetric monoidal category as
\emph{strong} if the natural map $e\otimes x\to x$ is an isomorphism.

\begin{lemma}\label{unbased}
Let $(\C,\otimes,e)$ be a symmetric monoidal category with strong unit $e$, and let $M$ be a multicategory.  Let $U\C$ be
``the'' underlying multicategory of $\C$ (there are many choices, but they're all canonically isomorphic.)  Then
a multifunctor $F:U\C\to M$ determines and is determined by:
\begin{enumerate}
\item
a functor on underlying categories,
\item
a 2-morphism $\lambda:(Fx,Fy)\to F(x\otimes y)$ for each pair of objects $x$ and $y$, and
\item
a 0-morphism $\eta:()\to Fe$,
\end{enumerate}
all subject to the four diagrams listed below.
\end{lemma}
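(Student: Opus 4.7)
The plan is to exhibit a bijection between multifunctors $F:U\C\to M$ and triples $(F_0,\lambda,\eta)$ satisfying the four diagrams. In the easy direction, given a multifunctor $F$, set $F_0$ to be the restriction of $F$ to $1$-morphisms; set $\lambda_{x,y}$ to be $F$ applied to the $2$-morphism $\id_{x\otimes y}:(x,y)\to x\otimes y$ of $U\C$ (recalling that a $2$-morphism $(x,y)\to z$ in $U\C$ is precisely a $\C$-morphism $x\otimes y\to z$); and set $\eta$ to be $F$ applied to the $0$-morphism $\id_e:()\to e$ of $U\C$. The four diagrams hold automatically because the corresponding identities already hold in $U\C$, being direct consequences of the coherence of the symmetric monoidal structure on $\C$.

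For the reverse direction, I construct a multifunctor $\hat F:U\C\to M$ from the triple $(F_0,\lambda,\eta)$. Given an $r$-morphism $f:(x_1,\dots,x_r)\to y$ of $U\C$, i.e., a morphism $f:x_1\otimes\cdots\otimes x_r\to y$ in $\C$, iterate $\lambda$ (bracketed, say, from the left) to obtain an $r$-morphism
\[
\lambda^{(r)}:(F_0(x_1),\dots,F_0(x_r))\to F_0(x_1\otimes\cdots\otimes x_r)
\]
in $M$, and set $\hat F(f):=F_0(f)\circ\lambda^{(r)}$. For $r=0$, a $0$-morphism of $U\C$ is a $\C$-morphism $g:e\to y$, and I set $\hat F(g):=F_0(g)\circ\eta$. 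The strong-unit hypothesis ensures compatibility between the nullary and binary constructions, since the unit diagrams then identify $\lambda_{x,e}$ and $\lambda_{e,x}$ (composed with $\eta$ in the appropriate slot) with the image under $F_0$ of the corresponding unit isomorphism in $\C$.

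It remains to verify that $\hat F$ is a multifunctor and that the two constructions are mutually inverse. The identity axioms follow from the unit diagrams and the definition of $\hat F$ on $1$-morphisms. Equivariance reduces to the symmetry diagram on $\lambda$, since the symmetric-group action on $r$-morphisms of $U\C$ is induced by the braiding of $\C$. The main obstacle is the composition axiom: for $f:(x_1,\dots,x_r)\to y$ and $g_i:(z_{i,1},\dots,z_{i,k_i})\to x_i$ in $U\C$, the multicategorical composite in $U\C$ corresponds to $f\circ(g_1\otimes\cdots\otimes g_r)$ in $\C$, and one must show
\[
\hat F\bigl(f\circ(g_1\otimes\cdots\otimes g_r)\bigr)=\hat F(f)\circ_M\bigl(\hat F(g_1),\dots,\hat F(g_r)\bigr).
\]
This is a diagram chase that uses the naturality of $\lambda$ to slide $F_0$ of a tensor product of morphisms past the iterated $\lambda^{(n)}$, the associativity diagram to regroup nested $\lambda$'s into a single $n$-fold iteration with $n=\sum_i k_i$, and the unit diagrams to handle any $k_i=0$. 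Once this is established, the inverse property of the two constructions is immediate: extracting the data from $\hat F$ recovers $(F_0,\lambda,\eta)$, and rebuilding from $F$ agrees with $F$ because every multi-morphism of $U\C$ factors through its underlying $\C$-morphism applied to the identity of the tensor product.
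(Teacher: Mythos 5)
Your proposal is correct and follows essentially the same route as the paper: extract $\lambda$ and $\eta$ by applying $F$ to the identity $2$- and $0$-morphisms of $U\C$, rebuild a multifunctor by setting $\hat F(f)=F_0(f)\circ\lambda^{(r)}$ with a left-bracketed iterate of $\lambda$, and verify composition via naturality and associativity of $\lambda$ (with the unit diagram covering nullary inputs) and equivariance via the symmetry diagram. The level of detail matches the paper's own argument, so no further changes are needed.
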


The diagrams that we require $\lambda$ 
and $\eta$ 
to satisfy are as follows.  First, we require $\lambda$ to be
coherently associative, in the sense that
\begin{equation}\label{Ass}
\xymatrix{
&(F(x\otimes y),Fz)\ar[r]^-{\lambda}
&F((x\otimes y)\otimes z)\ar@{<->}[dd]^-{F\alpha}_-{\cong}
\\(Fx,Fy,Fz)\ar[ur]^-{(\lambda, 1)}\ar[dr]_-{(1,\lambda)}
\\&(Fx,F(y\otimes z))\ar[r]_-{\lambda}
&F(x\otimes(y\otimes z))
}
\end{equation}
commutes, where $\alpha$ is the associativity isomorphism in $\C$.  We also require it to be consistent with the 
interchange isomorphism, in the sense that
\begin{equation}\label{Comm}
\xymatrix{
(Fx,Fy)\ar[r]^-{\tau}\ar[d]_-{\lambda}
&(Fy,Fx)\ar[d]^-{\lambda}
\\F(x\otimes y)\ar[r]_-{F\tau}&F(y\otimes x)
}
\end{equation}
commutes.  
In addition, we require a naturality condition for $\lambda$: given morphisms $f_1:x_1\to y_1$ and $f_2:x_2\to y_2$
in $\C$, we require the following diagram to commute:
\begin{equation}\label{Nat}
\xymatrix@C+15pt{
(Fx_1,Fx_2)\ar[r]^-{(Ff_1,Ff_2)}\ar[d]_-{\lambda}
&(Fy_1,Fy_2)\ar[d]^-{\lambda}
\\F(x_1\otimes x_2)\ar[r]_-{F(f_1\otimes f_2)}
&F(y_1\otimes y_2).
}
\end{equation}
Finally, we require $\lambda$  to be coherent with $\eta$
in the sense that the following diagram commutes:
\begin{equation}\label{Unit}
\xymatrix{
Fx\ar[r]^-{(\eta,1)}\ar[drr]_-{=}
&(Fe,Fx)\ar[r]^-{\lambda}
&F(e\otimes x)\ar[d]^-{\cong}
\\&&Fx.
}
\end{equation}
(The corresponding diagram for $x\otimes e$ now follows from diagram \ref{Comm}.)

\begin{proof}
Suppose first given a multifunctor $F:U\C\to M$.  Then certainly $F$ descends to a functor on the underlying categories.  
We also have a canonical and natural 2-morphism 
\[
(x,y)\to x\otimes y
\]
in $U\C$ given by the identity morphism on $x\otimes y$, and applying $F$ gives us our 2-morphism
\[
\lambda:(Fx,Fy)\to F(x\otimes y)
\]
in $M$.  

Since 0-morphisms in $U\C$ are given by morphisms from $e_\C$, we have a canonical 0-morphism 
$()\to e_\C$ given
by the identity on $e_\C$, and applying $F$ gives us
\[
\eta:()\to F(e_\C).
\]

The required diagrams for $\lambda$
 and $\eta$ 
 now follow from the properties of a symmetric monoidal category.

Now suppose given just a functor $F:U\C\to M$ on underlying categories, together with a 2-morphism $\lambda$ 
and 0-morphism $\eta$
subject to the given diagrams.  We wish to extend $F$ to a multifunctor.

Let $(x_1,\dots,x_r)$ be an $r$-tuple of objects in $\C$.  
We have by induction an $r$-morphism in $M$ 
\[
\lambda_r:(Fx_1,\dots,Fx_r)\to F(x_1\otimes\cdots\otimes x_r).
\]
Explicitly, we take $\lambda_0=\eta$, and 
\[
\lambda_{r+1}:=\lambda\circ(\lambda_r,\id).
\]  
Note in particular that $\lambda_1=\id$, by diagram \ref{Unit}.
Now given an arbitrary $r$-morphism $f:(x_1,\dots,x_r)\to y$ in $U\C$, there is a specified 1-morphism
$f:x_1\otimes\cdots\otimes x_r\to y$ in $\C$, and we agree to apply the ordinary functor $F$ to
this morphism and define the $r$-morphism $Ff$ in $M$ by the composite
\[
\xymatrix{
(Fx_1,\dots,Fx_r)\ar[r]^-{\lambda_r}
&F(x_1\otimes\cdots\otimes x_r)\ar[r]^-{Ff}
&Fy
}
\]
Note that if $f$ happens to be a 0-morphism, we have an empty tensor product which we agree to be $e_\C$,
and the definition still makes sense. We claim that these assignments
make $F$ into a multifunctor.  

We must first show that $F$ preserves the composition in $U\C$.  So suppose that for $1\le i\le s$, we have a $t_i$-morphism
$g_i:\br{w_{ij}}_{j=1}^{t_i}\to x_i$, and an $s$-morphism $f:\br{x_i}_{i=1}^s\to y$.  We need to show that
\[
F(f\circ\br{g_i}_{i=1}^s)=Ff\circ\br{F(g_i)}_{i=1}^s.
\]
Let $t=\sum_{i=1}^s t_i$; both sides of the proposed equality are $t$-morphisms.  We have the following diagram, which we claim commutes, where $\odot$ denotes concatenation of lists:
\[
\xymatrix{
\br{\br{Fw_{ij}}_{j=1}^{t_i}}_{i=1}^s\ar[d]_-{\br{\lambda_{t_i}}}\ar[r]^-{\odot}
&\bigodot_{i=1}^s\br{Fw_{ij}}_{j=1}^{t_i}\ar[d]^-{\lambda_t}
\\\br{F(\otimes_{j=1}^{t_i}w_{ij})}_{i=1}^s\ar[r]^-{\lambda_s}\ar[r]^-{\lambda_s}\ar[d]_-{\br{Fg_i}}
&F\left(\otimes_{i=1}^s\otimes_{j=1}^{t_i}w_{ij}\right)\ar[d]^-{F(\otimes_{i=1}^sg_i)}
\\\br{Fx_i}_{i=1}^s\ar[r]_-{\lambda_s}
&F\left(\otimes_{i=1}^sx_i\right)\ar[r]^-{Ff}
&Fy.
}
\]
Tracing counterclockwise gives the right hand side, and clockwise gives the left hand side.  The top square commutes
up to an explicit associativity isomorphism
by induction using the associativity of $\lambda$, or the unit diagram in case any of the $g_i$ are 0-morphisms, and the bottom square by induction using the naturality of $\lambda$.
The extended functor therefore preserves composition in the sense of multicategories.

We must also show that $F$ preserves the permutation actions on sets of morphisms.  Since permutations are generated
by transpositions, it suffices to show preservation of transpositions.  Suppose given a 2-morphism $f:(x,y)\to z$ in $U\C$,
which amounts to a 1-morphism $f:x\otimes y\to z$, abusively denoted by the same letter.  Then $f\tau$ is given by
the composite
\[
\xymatrix{
y\otimes x\ar[r]^-{\tau}&x\otimes y\ar[r]^-{f}&z.
}
\]
Now the commutative diagram
\[
\xymatrix{
(Fy,Fx)\ar[r]^-{\tau}\ar[d]_-{\lambda}
&(Fx,Fy)\ar[d]^-{\lambda}
\\F(y\otimes x)\ar[r]_-{F\tau}
&F(x\otimes y)\ar[r]_-{Ff}&z
}
\]
shows that $F(f\tau)=(Ff)\tau$.  We have therefore shown that $F$ extends to a multifunctor. 
It is straightforward to check that the two constructions are inverse to each other.
\end{proof}

\ignore
The based version of the lemma is as follows.

\begin{lemma}\label{based}
Let $(\C,\otimes,e)$ be a symmetric monoidal category with strong unit $e$, and let $M$ be a
based multicategory.  Let $U\C$ be
``the'' underlying based multicategory of $\C$ (there are many choices, but they're all canonically isomorphic.)  Then
a based multifunctor $F:U\C\to M$ determines and is determined by:
\begin{enumerate}
\item
a based functor on underlying categories, and
\item
a 2-morphism $\lambda:(Fx,Fy)\to F(x\otimes y)$ for each pair of objects $x$ and $y$,
\end{enumerate}
subject to the following conditions.  
\end{lemma}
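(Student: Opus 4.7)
The plan is to derive Lemma \ref{based} from Lemma \ref{unbased} by observing that in the based setting the 0-morphism $\eta$ is forced by the basing of $M$, so it ceases to be independent data.

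For the forward direction, let $F: U\C \to M$ be a based multifunctor. Forgetting basedness, I apply Lemma \ref{unbased} to get a functor $F_0$ on underlying categories, a 2-morphism $\lambda$, and a 0-morphism $\eta$ along with the four coherence diagrams. Since $F$ is based, $F_0(e) = *_M$ and $F_0$ is a based functor. The canonical 0-morphism $() \to e$ in $U\C$ is the image of the unique 0-morphism in the terminal multicategory under the basing $* \to U\C$; commuting with basings then forces $\eta$ to equal the canonical 0-morphism $() \to *_M$ in $M$, so $\eta$ carries no additional information beyond the based structure.

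For the reverse direction, given a based functor $F_0$ and a 2-morphism $\lambda$ subject to the stated conditions, I take $\eta$ to be the canonical 0-morphism from the basing of $M$ and invoke Lemma \ref{unbased} to construct an unbased multifunctor $F$. It then remains to verify that $F$ commutes with the basing multifunctors $* \to U\C$ and $* \to M$. Since the terminal multicategory has a unique $r$-morphism for each $r$, this amounts to checking that $F$ sends the canonical $r$-morphism $(e, \ldots, e) \to e$ in $U\C$ to the canonical $r$-morphism $(*_M, \ldots, *_M) \to *_M$ in $M$, which follows by induction on $r$ using the inductive definition $\lambda_{r+1} = \lambda \circ (\lambda_r, \id)$ with $\lambda_0 = \eta$.

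The main obstacle is identifying the precise list of coherence conditions in the based statement. The associativity, interchange, and naturality diagrams \eqref{Ass}, \eqref{Comm}, and \eqref{Nat} are clearly still required, since they concern $\lambda$ alone. Whether the unit diagram \eqref{Unit} must remain or becomes automatic once $\eta$ is canonically fixed is the delicate point: it should follow from the strong unit property of $e$ in $\C$ together with the uniqueness of morphisms out of the terminal multicategory, but this needs careful checking. The mutual-inverseness of the two constructions, once the condition list is settled, then reduces to the corresponding verification in the unbased proof.
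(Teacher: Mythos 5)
Your overall strategy---reduce to Lemma \ref{unbased} by taking $\eta$ to be the canonical 0-morphism supplied by the basing of $M$---is the same as the paper's, and your forward direction is fine. The gap is in the reverse direction, at the point where you claim that basedness of the extended multifunctor ``follows by induction on $r$.'' Being based means $F$ carries the canonical $r$-morphisms $(e,\dots,e)\to e$ of $U\C$ (the unit isomorphisms $e^{\otimes r}\to e$) to the structural $r$-morphisms of the base object of $M$ coming from the basing multifunctor $*\to M$. For $r=0$ this holds by your choice of $\eta$, and $r=1$ is trivial, but already at $r=2$ you need
\[
F\bigl(e\otimes e\xrightarrow{\ \cong\ }e\bigr)\circ\lambda \;=\;\mu
\quad\text{as 2-morphisms }(Fe,Fe)\to Fe,
\]
where $\mu$ is the binary structure map of the base object of $M$. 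Nothing in diagrams \eqref{Ass}, \eqref{Comm}, \eqref{Nat}, \eqref{Unit} forces this: those diagrams only show that the left-hand side is \emph{some} commutative monoid structure on $Fe$ with unit $\eta$, and an object of a multicategory can carry two distinct commutative monoid structures with the same unit (there is no Eckmann--Hilton interchange available between $\lambda$ and the basing of $M$). This compatibility is exactly the extra, fifth coherence condition the paper imposes in its based version of the lemma; with it your induction goes through, but without it the base case $r=2$ fails, so it must be added to your list of conditions rather than derived.

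A second, smaller point: your hope that the unit diagram \eqref{Unit} might become automatic once $\eta$ is canonically fixed is misplaced. That diagram constrains the components of $\lambda$ at the pairs $(e,x)$ for \emph{every} object $x$, and none of the remaining data touches those components; it is needed as a hypothesis in order to invoke Lemma \ref{unbased} at all. The paper's condition list is: all four diagrams of Lemma \ref{unbased}, with $\eta$ declared to be the canonical 0-morphism of the base object, plus the base-object compatibility displayed above; the forward direction then holds simply because a based multifunctor preserves base objects and their structure maps.
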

First, we declare $\eta:()\to F(e_\C)$ to be the canonical 0-morphism in $M$
arising from $F(e_\C)$ being the base object of $M$.  Next, we require all four diagrams from Lemma \ref{unbased}
to commute.  And we also require the following diagram to commute, where $\mu$ is the product map
on $F(e_\C)$ arising from it being the base object of $M$:
\begin{equation}\label{monoid}
\xymatrix{
(F(e_\C),F(e_\C))\ar[r]^-{\lambda}\ar[dr]_-{\mu}
&F(e_\C\otimes e_\C)\ar[d]^-{F(\mu)}
\\&F(e_\C).
}
\end{equation}

\begin{proof}
If $F$ is a based multifunctor, then we obtain $\lambda$ as before, and the fact that $F(e_\C)$ is the base object
of $M$ forces all the diagrams to commute; diagram \ref{monoid} follows since $F$ preserves base objects.

Now if $F$ is a based functor together with $\lambda$ satisfying the five conditions, then we obtain a multifunctor
as before, and it must be based because of diagram \ref{monoid}.  This completes the proof.
\end{proof}
\endignore

\section{From $\Gamma$-categories to multifunctors}\label{G2M}
This section is devoted to proving Theorem \ref{step1}, that the construction $A:\Gamma$-$\Cat\to\Mult_*(\N,\Cat^\op)$,
given as follows,
extends to a multifunctor.  

Here is the construction of $AX$ given a $\Gamma$-category $X$.  We need to first assign a based category $AX[n]$ to any
object $[n]\in\N$, and we just use the category $X(n)$.  Next, given a morphism $f:([n_1],\dots,[n_r])\to[m]$ in $\N$,
we need an associated morphism $(AX[n_1],\dots,AX[n_r])\to AX[m]$ in $\Cat^\op$.  But the morphism $f$ consists
just of a function
\[
f:[m]\to[n_1]\amalg\cdots\amalg[n_r],
\]
to which we can attach a disjoint basepoint on each side, considered as new elements 0, and obtain a map of based sets
\[
f_+:\u m\to\u n_1\vee\cdots\vee\u n_r.
\]
For any index $i$ with $1\le i\le r$, we can then collapse all the wedge summands except $\u n_i$ to the basepoint,
producing a map $f_i:\u m\to\u n_i$ in $\Gamma^\op$.  Since $X$ is a $\Gamma$-category, this induces a functor 
$X(f_i):X(m)\to X(n_i)$ which we use as the $i$'th coordinate map to the product of the $X(n_i)$'s, giving us
a functor
\[
AX(f):X(m)\to X(n_1)\times\cdots\times X(n_r);
\]
this functor is based since all the components are induced by maps in a $\Gamma$-category, which must be based functors.
This concludes the description of the functor $A$; we now turn to showing it extends to a multifunctor structure, for which
we need multicategory structures on both source and target.

For the multicategory structure on $\Gamma$-$\Cat$, we exploit the Day convolution construction.  There is a permutative category structure on $\Gamma^\op$
in which we use lexicographic order to identify $\u m\sm\u n$ with $\u{m\cdot n}$.  Now given two $\Gamma$-categories
$X$ and $Y$, noting that for all $n$ both $X(n)$ and $Y(n)$ are based categories, we form the smash product
$X\sm Y$ as the left Kan extension
\[
\xymatrix{
\Gamma^\op\times\Gamma^\op\ar[r]^-{X\times Y}\ar[d]_-{\sm}
\drtwocell<\omit>
&\Cat\times\Cat\ar[d]^-{\sm}
\\\Gamma^\op\ar[r]_-{X\sm Y}
&\Cat.
}
\]
This gives us a symmetric monoidal structure on $\Gamma$-$\Cat$;  
the unit object is the $\Gamma$-category $B$ given by
\[
B(n)=
\begin{cases}
S^0&\text{ if }n=1,
\\{*}&\text{ if }n\ne1.
\end{cases}
\]
Here $S^0$ is a two-object discrete category with one object the base; it is the unit for the smash product of based categories.
We therefore have an underlying multicategory structure on $\Gamma$-$\Cat$.

To give the multicategory structure on $\Mult_*(\N,\Cat^\op)$, we need extra structure on both $\N$ and $\Cat^\op$, which
we call \emph{ring} structure.

\begin{definition}
Let $M$ be a based multicategory.  A \emph{ring} structure on $M$ consists of a based bilinear map of multicategories
\[
\lambda:(M,M)\to M
\]
together with a unit multifunctor $\eta:u\to M$, where $u$ is the unit for the smash product of based multicategories 
(see \cite{EM2} for further details; such a multifunctor amounts simply to a choice of object of $M$.) 
These are then subject to the same associativity and symmetry conditions one
imposes on a symmetric monoidal category.
\ignore
the same diagrams one expects for a permutative category, 
namely the following:
\endignore
\end{definition}

\ignore
We note that a ring structure on a multicategory $M$ is the same thing as an action of the categorical Barratt-Eccles
operad on $M$ using the multicategory structure on $\Mult_*$.
\endignore

The ring structures we have in mind are given by the following two lemmas.

\begin{lemma}
The multicategory $\N$ has a ring structure given by 
\[
\lambda([m],[n]):=[m\cdot n],
\]
 using lexicographic order to 
identify $[m\cdot n]$ with $[m]\times[n]$ in order to extend to morphisms.  The unit object is $[1]=\{1\}$.
\end{lemma}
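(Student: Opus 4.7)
The plan is to construct $\lambda$ explicitly on morphisms and then verify the bilinearity and ring axioms directly, using the fact that multiplication distributes over the disjoint unions that encode morphisms in $\N$.

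On objects, set $\lambda([m],[n]):=[m\cdot n]$. Recall that an $r$-morphism $f:([m_1],\dots,[m_r])\to[m]$ in $\N$ is just a function $f:[m]\to[m_1]\amalg\cdots\amalg[m_r]$, and similarly an $s$-morphism $g:([n_1],\dots,[n_s])\to[n]$ is a function $g:[n]\to[n_1]\amalg\cdots\amalg[n_s]$. Using the lexicographic identifications $[m\cdot n]\cong[m]\times[n]$ and $[m_i\cdot n_j]\cong[m_i]\times[n_j]$, I would define the required $rs$-morphism
\[
\lambda(f,g):[m\cdot n]\to\coprod_{i,j}[m_i\cdot n_j]
\]
as the function sending $(a,b)\mapsto(f(a),g(b))$, placed into the summand $[m_i\cdot n_j]$ determined by whichever $[m_i]$ contains $f(a)$ and whichever $[n_j]$ contains $g(b)$. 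The unit multifunctor $\eta:u\to\N$ simply picks out $[1]$.

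For bilinearity, I would verify three items: that $\lambda(\id,\id)=\id$, which is immediate; that $\lambda$ is functorial separately in each variable, which reduces to the identity $(a,b)\mapsto(f'(f_i(a)),g(b))$ witnessing a composite of pair-functions as the pair of composites; and that $\lambda$ is equivariant under the block-shuffle $\Sigma_r\times\Sigma_s\to\Sigma_{rs}$, which is exactly the permutation describing how a block-reordering of an $r$-fold disjoint union reorders the corresponding $rs$-fold disjoint union under the lex identification. Basedness follows because $[0]\cdot[n]=[0]=[m]\cdot[0]$, so pairings involving the basepoint remain the basepoint.

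For the ring axioms, on objects associativity and symmetry reduce to $[(mn)k]=[m(nk)]$ and $[mn]=[nm]$; on morphisms, the associativity isomorphism is realized by the evident bijection $([m]\times[n])\times[k]\cong[m]\times([n]\times[k])$, which preserves lex order, while the symmetry isomorphism is realized by the swap $[m]\times[n]\to[n]\times[m]$, which under lex identification is the standard $(m,n)$-shuffle permutation of $[mn]\to[nm]$ demanded by the symmetry datum of a ring structure. The unit axiom reduces to $1\cdot n=n\cdot 1=n$. The main obstacle is bookkeeping around the lex identifications, verifying that the concrete bijections one writes down coincide with the coherence permutations built into the definitions of bilinear map and of ring structure; once that bookkeeping is spelled out, every required diagram commutes on the nose.
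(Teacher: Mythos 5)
Your proposal is correct and matches the paper's intended argument: the paper leaves this verification to the reader, remarking only that bilinearity of $\lambda$ is an aspect of the distributive law, and your pointwise formula $(a,b)\mapsto(f(a),g(b))$ under the lexicographic identifications is exactly that distributivity, with the remaining checks (separate functoriality, equivariance, basedness via $0\cdot n=0$, strict associativity and unit, and the transpose-permutation symmetry) being the straightforward bookkeeping the paper omits.
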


The proof is straightforward and left to the reader: bilinearity of $\lambda$ is an aspect of the distributive law.

The next lemma gives the ring structure on $\Cat^\op$.

\begin{lemma}
The multicategory $\Cat^\op$ has a ring structure given by
\[
\lambda(\C,\D):=\C\sm\D.
\]
Given an $r$-morphism $f:(\C_1,\dots,\C_r)\to\C$, i.e., a functor $f:\C\to\C_1\times\cdots\times\C_r$, the induced
$r$-morphism $\lambda(f,\D):(\C_1\sm\D,\dots,\C_r\sm\D)\to\C\sm\D$ is given by the functor
\[
\xymatrix{
\C\sm\D\ar[r]
&(\C_1\sm\D)\times(\C_2\sm\D)\times\cdots\times(\C_r\sm\D)
}
\]
whose $i$th coordinate map is given by $f_i\sm\D:\C\sm\D\to\C_i\sm\D$,
and similarly in the variable $\D$.  The unit category is the unit $S^0$ for the smash product of based categories:
it is discrete with two objects, one of which is the base object.
\end{lemma}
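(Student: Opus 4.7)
The plan is to verify that $(\lambda,\eta)=(\sm,S^0)$ meets the definition of a ring structure, with each required axiom reducing to a known property of the smash product on based categories.

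First I would show $\lambda$ is bilinear. Fix $\D$; given an $r$-morphism $f:(\C_1,\dots,\C_r)\to\C$ in $\Cat^{\op}$---equivalently a based functor $f:\C\to\C_1\times\cdots\times\C_r$ with coordinates $f_i=\pi_i\circ f$---the prescription defines $\lambda(f,\D)$ as the based functor $\C\sm\D\to\prod_i(\C_i\sm\D)$ whose $i$th coordinate is $f_i\sm\id_\D$. Preservation of identities is immediate, preservation of multicategorical composition follows coordinate-wise from $(h\circ k)\sm\D=(h\sm\D)\circ(k\sm\D)$, and equivariance under the symmetric group action follows from naturality of the coordinate projections. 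The argument in the second variable is symmetric. The interchange condition---that the two orders of combining an $r$-morphism $f$ with an $s$-morphism $g$ produce the same $rs$-morphism---reduces coordinate-wise to the bifunctoriality identity $(f_i\sm\id)\circ(\id\sm g_j)=f_i\sm g_j=(\id\sm g_j)\circ(f_i\sm\id)$, so both orders yield the functor $\C\sm\D\to\prod_{i,j}(\C_i\sm\D_j)$ whose $(i,j)$-coordinate is $f_i\sm g_j$.

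Next I would verify the ring axioms. The unit condition $\lambda(S^0,\C)\cong\C$ is exactly the strong-unit property of $S^0$ for the smash product of based categories, transported coordinate-wise to the multicategory level. Associativity and symmetry of $\lambda$, together with their coherence diagrams, descend coordinate-wise from the associator and braiding of the symmetric monoidal category $(\Cat,\sm,S^0)$; the requisite coherence axioms for a ring structure reduce to MacLane coherence for this structure.

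The main obstacle is bookkeeping: because the multicategory structure on $\Cat^{\op}$ uses the cartesian product while the ring multiplication uses the smash product, each verification requires writing out coordinate functors and tracking how smashing with a fixed object interacts with products of functors---all in the opposite direction from $\Cat$. Once each check is reduced coordinate-wise to a statement purely about $(\Cat,\sm,S^0)$, the rest is routine.
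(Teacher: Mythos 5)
Your proposal is correct and follows essentially the same route as the paper: the heart of the matter is the interchange of the two multifunctorialities, which you resolve exactly as the paper does by observing that either order of combining $f$ and $g$ yields the functor into $\prod_{i,j}(\C_i\sm\D_j)$ whose $(i,j)$-coordinate is $f_i\sm g_j$. The remaining checks (multifunctoriality in each variable separately, the unit $S^0$, and the associativity/symmetry coherences inherited from $(\Cat,\sm,S^0)$) are the verifications the paper calls straightforward, and your coordinate-wise reductions handle them adequately.
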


\begin{proof}
The main issue is to show that the multifunctorialities in $\C$ and $\D$ interact correctly.  For this, suppose
given an $r$-map
$f:\C\to\C_1\times\cdots\times\C_r$ and an $s$-map $g:\D\to\D_1\times\cdots\times\D_s$. Then we have
\[
\xymatrix@C+15pt{
&\prod_{i=1}^r(\C_i\sm\D)\ar[r]^-{\prod\lambda(\C_i,g)}
&\prod_{i=1}^r\prod_{j=1}^s(\C_i\sm\D_j)\ar@{<->}[dd]^-{\cong}
\\\C\sm\D\ar[ur]^-{\lambda(f,\D)}\ar[dr]_-{\lambda(\C,g)}
\\&\prod_{j=1}^s(\C\sm\D_j)\ar[r]_-{\prod\lambda(f,\D_j)}
&\prod_{j=1}^s\prod_{i=1}^r(\C_i\sm\D_j),
}
\]
which commutes since the projection to the factor $\C_i\sm\D_j$ is always just $f_i\sm g_j$.
This
establishes the coherence necessary to have a bilinear map of multicategories.  The other verifications are
straightforward.
\end{proof}

Given a ring structure $\lambda:(M,M)\to M$, we can iterate and obtain a canonical $r$-linear map
$\lambda_r:(M,\dots,M)\to M$, where there are $r$ copies of $M$ in the source.  We think of the unit
map as being $\lambda_0$.  We can now define the multicategory structure on $\Mult_*(\N,\Cat^\op)$ of
interest to us; as mentioned above, this is \emph{not} the multicategory structure given by the enrichment
of $\Mult_*$ over itself given by its symmetric monoidal structure, which makes no use of the ring structures.
Instead, this is a multicategory structure that twists by means of the ring structures.  

\begin{definition}
Let $F_1,\dots,F_r$, and $G$ be elements of $\Mult_*(\N,\Cat^\op)$.  We define an $r$-map
$(F_1,\dots,F_r)\to G$ to be a based $r$-linear transformation $\phi$ as in the following diagram:
\[
\xymatrix@C+20pt{
(\N,\dots,\N)\ar[r]^-{(F_1,\dots,F_r)}\ar[d]_-{\lambda_r}
\drtwocell<\omit>{\phi}
&(\Cat^\op,\dots,\Cat^\op)\ar[d]^-{\lambda_r}
\\\N\ar[r]_-{G}
&\Cat^\op.
}
\]
\end{definition}

Unpacking this a bit, for each $r$-tuple $([m_1],\dots,[m_r])$ of objects of $\N$, we require a based functor
\[
\phi_{m_1,\dots,m_r}:F_1m_1\sm\cdots\sm F_rm_r\to G(m_1\cdots m_r)
\]
that is multifunctorial in each variable $m_i$ separately, basepoint preserving, and based bilinear 
(in the sense of \cite{EM2}, Definition 2.8)
in each pair of variables.  To illustrate
in just the case of a bilinear transformation $(F_1,F_2)\to G$, we require based functors
\[
\phi_{m,n}:F_1m\sm F_2n\to G(mn)
\]
for all $m, n\in\N$, and given an $r$-morphism $f:(m_1,\dots,m_r)\to m'$ in $\N$, that is, a function
\[
f:[m']\to[m_1]\amalg\cdots\amalg[m_r],
\]
we require the following diagram to commute:
\[
\xymatrix@C+10pt{
F_1m'\sm F_2n\ar[r]^-{\phi_{m',n}}\ar[d]_-{\lambda_2(F_1f,F_2n)}
&G(m'n)\ar[d]^-{G(f\times[n])}
\\\prod_{i=1}^r(F_1m_i\sm F_2n)\ar[r]_-{\prod\phi_{m_i,n}}
&\prod_{i=1}^rG(m_in),
}
\]
along with an analogous diagram in the variable $n$.  Given in addition an $s$-morphism
\[
g:[n']\to[n_1]\amalg\cdots\amalg[n_s],
\]
we 
also require the bilinearity diagram
\[
\xymatrix{
&F_1m'\sm F_2n'\ar[dl]_-{\lambda_2(F_1f,F_2n')\phantom{mm}}\ar[dr]^-{\phantom{m}\lambda_2(F_1m',F_2g)}
\\\prod_{i=1}^r(F_1m_i\sm F_2n')\ar[d]_-{\prod\phi_{m_i,n'}}
&&\prod_{j=1}^s(F_1m'\sm F_2n_j)\ar[d]^-{\prod\phi_{m',n_j}}
\\\prod_{i=1}^rG(m_in')\ar[d]_-{\prod G(m_i\cdot g)}
&&\prod_{j=1}^sG(m'n_j)\ar[d]^-{\prod G(f\cdot n_j)}
\\\prod_{i=1}^r\prod_{j=1}^sG(m_in_j)\ar[rr]_-{\cong}
&&\prod_{j=1}^s\prod_{i=1}^rG(m_in_j)
}
\]
to commute.  These diagrams are then modified as appropriate for larger numbers of variables,
but with no significant differences.

We can now begin the proof of Theorem \ref{step1}.  Since $\Gamma$-$\Cat$ forms a symmetric monoidal category,
Lemma \ref{unbased} tells us that we need 
to check that $A$ gives us a functor,
specify a natural 2-morphism $\lambda:(AX,AY)\to A(X\sm Y)$ in
$\Mult_*(\N,\Cat^\op)$ for $\Gamma$-categories $X$ and $Y$, specify
a 0-morphism $\eta:()\to A(B)$, and show that these
choices satisfy the four diagrams listed after Lemma \ref{unbased}.

For functoriality, given a morphism $q:X\to Y$ of $\Gamma$-categories, we get a 1-morphism $Aq:AX\to AY$ of multifunctors in 
$\Mult_*(\N,\Cat^\op)$ as follows.  For each $[m]\in\N$, we have $q[m]: X(m)\to Y(m)$ as the component of the natural
map $q$ at the object $[m]$ of $\Gamma^\op$; this gives us the required morphisms $AX[m]=X(m)\to Y(m)=AY[m]$.  To see
that these maps give us a multinatural transformation, suppose given an $r$-morphism $f:[m']\to[m_1]\amalg\cdots
\amalg[m_r]$ in $\N$.  Then we get the induced maps $f_i:\u m'\to\u m_i$ for each $i$ with $1\le i\le r$, and naturality
of $q$ now tells us that 
\[
\xymatrix{
X(m')\ar[r]^-{q(m')}\ar[d]_-{X(f_i)}
&Y(m')\ar[d]^-{Y(f_i)}
\\X(m_i)\ar[r]_-{q(m_i)}
&Y(m_i)
}
\]
commutes.  Since each $X(f_i)$ and $Y(f_i)$ are the coordinate maps for the induced map in $\Cat^\op$, we find that
\[
\xymatrix{
X(m')\ar[r]^-{q(m')}\ar[d]_-{AX(f)}
&Y(m')\ar[d]^-{AY(f)}
\\\prod_{i=1}^rX(m_i)\ar[r]_-{\prod q(m_i)}
&\prod_{i=1}^rY(m_i)
}
\]
commutes, showing that $Aq$ is in fact multinatural.  It is easy to see that composition of 1-morphisms is preserved.

For the construction of $\lambda:(AX,AY)\to A(X\sm Y)$, we use the natural transformations giving $X\sm Y$ as
a left Kan extension: just pasting on the functor $\N\to\Gamma^\op$ that attaches the disjoint basepoint 0, we get
the necessary transformations from the pasting diagram
\[
\xymatrix{
\N\times\N\ar[r]\ar[d]_-{\lambda_2}
&\Gamma^\op\times\Gamma^\op\ar[r]^-{X\times Y}\ar[d]^-{\sm}\drtwocell<\omit>
&\Cat^\op\times\Cat^\op\ar[d]^-{\sm}
\\\N\ar[r]
&\Gamma^\op\ar[r]_-{X\sm Y}
&\Cat^\op.
}
\]
The first three coherence diagrams now follow from the universal property of the left Kan extension defining $X\sm Y$.

To define the 0-morphism $\eta:()\to A(B)$, we note that the smash product of an empty list of based categories
must be the unit $S^0$ for the smash product, and the product of an empty list of integers is 1.  We therefore 
just need to specify a map of based categories from $S^0$ to $B(1)$, but since $B(1)=S^0$, we can just use 
the identity functor.  The fourth coherence diagram now follows.
This concludes the proof of Theorem \ref{step1}.

\section{The Multicategorical Wreath Product}\label{MWP}

This section is devoted to proving Theorem \ref{step2}, which states that the multicategorical wreath product provides
a multifunctor from $\Mult_*(\N,\Cat^\op)$ to $\Mult_*$.  
We begin with the description of the construction.

Suppose given a based multicategory $M$ and a based multifunctor $F:M\to\Cat^\op$.  We define a based multicategory
$M\wr F$ as follows.  The objects of $M\wr F$ are given as
\[
\Ob(M\wr F):=\coprod_{a\in M}\Ob(Fa),
\]
with the base object given by the unique object of the terminal category to which the base object of $M$ gets mapped.
Given a source string $(x_1,\dots,x_r)$ with $x_i\in Fa_i$ and a target object $y\in Fb$, a morphism 
$(x_1,\dots,x_r)\to y$ in $M\wr F$ consists of 
\begin{enumerate}
\item
an $r$-morphism $f:(a_1,\dots,a_r)\to b$ in $M$, which induces an $r$-morphism $Ff:(Fa_1,\dots,Fa_r)\to Fb$
in $\Cat^\op$, in other words, a functor $Ff:Fb\to Fa_1\times\cdots\times Fa_r$,
\item
an $r$-tuple of morphisms $\psi_i\in Fa_i$ assembling to 
\[
\prod_{i=1}^r\psi_i:(x_1,\dots,x_r)\to(Ff)(y)
\]
as a morphism in $\prod_{i=1}^r Fa_i$.
\end{enumerate}
Composition is now straightforward to construct.
This completes the description of the multicategorical wreath product we wish to use (there are other variants.)

Now given an $r$-morphism $(F_1,\dots,F_r)\to G$
in $\Mult_*(\N,\Cat^\op)$, we need to produce an $r$-morphism $(\N\wr F_1,\dots,\N\wr F_r)\to\N\wr G$ in $\Mult_*$.  
We start with just the case $r=2$, so suppose given a 2-morphism $\phi:(F_1,F_2)\to G$, and we wish to produce a based bilinear
map of based multicategories $\N\wr\phi:(\N\wr F_1,\N\wr F_2)\to\N\wr G$.

On objects, given a pair of objects $(x,m)$ and $(y,n)$ of $\N\wr F_1$ and $\N\wr F_2$
respectively, so $x\in F_1[m]$ and $y\in F_2[n]$, the 2-morphism $\phi$ provides us with an object $\phi_{m,n}(x,y)\in G[m\cdot n]$,
so we can define the map on objects by
\[
(\N\wr\phi)((x,m),(y,n)):=(\phi_{m,n}(x,y),m\cdot n)\in\N\wr G.
\]
We need to see that this assignment can be made based multifunctorial in each variable, and based bilinear.  

Suppose we have an $r$-morphism $((x_1,m_1),\dots,(x_r,m_r))\to(x',m')$ in $\N\wr F_1$, so this unpacks as
an $r$-morphism
\[
f:[m']\to[m_1]\amalg\cdots\amalg[m_r]
\]
in $\N$, which induces
\[
F_1f:F_1m'\to F_1m_1\times\cdots\times F_1m_r,
\]
an $r$-morphism in $\Cat^\op$; we then also require an $r$-tuple of morphisms $\br{\psi_i}_{i=1}^r$ giving a morphism
in $\prod_{i=1}^rF_1m_i$ from $(x_1,\dots,x_r)$ to $(F_1f)(x')$.  We wish to construct from these data an
$r$-morphism $\br{(\phi_{m_i,n}(x_i,y),m_i\cdot n)}\to(\phi_{m',n}(x',y),m'\cdot n)$ in $\N\wr G$.

We start with the $r$-morphism in $\N$ given by
\[
f\times[n]: [m'\cdot n]\to[m_1\cdot n]\amalg\cdots\amalg[m_r\cdot n]
\]
obtained from the lexicographic identification $[m_i]\times[n]\cong[m_i\cdot n]$.  
This induces the functor $G(f\times[n]):G(m'n)\to\prod_{i=1}^rG(m_in)$, and we also desire an $r$-tuple of morphisms
from $\br{\phi_{m_i,n}(x_i,y)}$ to $G(f\times[n])(\phi_{m',n}(x',y))$ in $\prod_{i=1}^rG(m_i,n)$.
We know that the square
\[
\xymatrix@C+10pt{
F_1m'\sm F_2n\ar[r]^-{\phi_{m',n}}\ar[d]_-{\lambda_2(F_1f,F_2n)}
&G(m'n)\ar[d]^-{G(f\times[n])}
\\\prod_{i=1}^r(F_1m_i\sm F_2n)\ar[r]_-{\prod\phi_{m_i,n}}
&\prod_{i=1}^rG(m_in),
}
\]
commutes, so
\[
G(f\times[n])(\phi_{m',n}(x',y))=\bigg(\prod_{i=1}^r\phi_{m_i,n}\bigg)((F_1f)(x'),y)
=\br{\phi_{m_i,n}\left((F_1)f(x')_i,y\right)}_{i=1}^r,
\]
where we write $(F_1)(x')_i$ for the component of $(F_1f)(x')$ in $F_1m_i$.
But now we can exploit the bifunctoriality of all the $\phi_{m_i,n}$'s to insert the $\br{\psi_i}$'s into the first slot,
giving us an $r$-tuple of maps
\[
\br{\phi_{m_i,n}(\psi_i,y)}_{i=1}^r:\br{\phi_{m_i,n}(x_i,y)}_{i=1}^r\to
\br{\phi_{m_i,n}\left((F_1)f(x')_i,y\right)}_{i=1}^r
\]
which assemble to a single morphism 
in $\prod_{i=1}^rG(m_in)$.  We have therefore produced the desired $r$-morphism in $\N\wr G$. We proceed
similarly given an $s$-morphism in $\N\wr F_2$.  If there are a larger number of $F$'s, the appropriate modifications
to the construction are straightforward.  Preservation of composition requires several large diagrams that the reader
is encouraged to construct for herself.

For the bilinearity diagram, suppose we have an $r$-morphism
\[
(f,\br{\psi_i}_{i=1}^r):\br{(x_i,m_i)}_{i=1}^r\to(x',m')
\]
in $\N\wr F_1$, and an $s$-morphism
\[
(g,\br{\xi_j}_{j=1}^s):\br{(y_j,n_j)}_{j=1}^s\to(y',n')
\]
in $\N\wr F_2$.  We wish the following diagram to commute in $\N\wr G$:
\[
\xymatrix@C+15pt{
\br{\br{(\N\wr\phi)((x_i,m_i),(y_j,n_j))}_{i=1}^r}_{j=1}^s
\ar[r]^-{(f,\br{\psi_i})_*}
\ar@{<->}[d]_-{\cong}
&\br{(N\wr\phi)((x',m'),(y_j,n_j))}_{j=1}^s
\ar[dd]^-{(g,\br{\xi_j})_*}
\\\br{\br{(\N\wr\phi)((x_i,m_i),(y_j,n_j))}_{j=1}^s}_{i=1}^r
\ar[d]_-{(g,\br{\xi_j})_*}
\\\br{(\N\wr\phi)((x_i,m_i),(y',n'))}_{i=1}^r
\ar[r]_-{(f,\br{\psi_i})_*}
&(\N\wr\phi)((x',m'),(y',n')).
}
\]
Tracing clockwise, we have the $r$-tuple of morphisms 
\[
\br{\psi_i}_{i=1}^r:\br{x_i}_{i=1}^r\to(F_1f)(x'),
\]
or again writing $(F_1f)(x')_i$ for the component of $(F_1f)(x')$ in $F_1m_i$, we have 
\[
\psi_i:x_i\to(F_1f)(x')_i
\]
as a morphism in $F_1m_i$ for $1\le i\le r$.  Exploiting the bifunctoriality of $\phi$ as required, we
obtain an $r$-tuple of morphisms for all $j$ as follows:
\[
\phi_{m_i,n_j}(\psi_i,y_j):\phi_{m_i,n_j}(x_i,y_j)\to\phi_{m_i,n_j}((F_1f)(x'),y_j).
\]
Since we have
\[
G(f\times[n_j])(\phi_{m',n_j}(x',y_j))
=\br{\phi_{m_i,n_j}((F_1f)(x'),y_j)}_{i=1}^r,
\]
these assemble to give us the required morphism defining an $r$-map in $\N\wr G$ for each $1\le j\le s$.

We now wish to compose this $s$-tuple of morphisms in $\N\wr G$ with the single $s$-morphism
induced by $(g,\br{\xi_j}_{j=1}^s)$.  Explicitly, we have 
\[
g:[n']\to\coprod_{j=1}^s[n_j]
\]
which induces
\[
F_2g:F_2n'\to\prod_{j=1}^sF_2n_j,
\]
and we also have
\[
\xi_j:y_j\to(F_2g)(y')_j
\]
as a morphism in $F_2n_j$ for all $j$.  In the same manner as before, we have
\[
G([m']\times g)(\phi_{m',n'}(x',y'))=\br{\phi_{m',n_j}(x',(F_2g)(y')_j)}_{j=1}^s,
\]
and so bifunctoriality of $\phi_{m',n_j}$ gives us maps
\[
\phi_{m',n_j}(x',\xi_j):\phi_{m',n_j}(x',y_j)\to\phi_{m',n_j}(x',(F_2g)(y')_j)
\]
for all $j$, giving us the required $s$-morphism in $\N\wr G$.

Now composing these data, we first have the composite in $\N$ given by
\[
\xymatrix@C+25pt{
[m'n']\ar[r]^-{[m']\times g}
&\d\coprod_{j=1}^s[m'n_j]\ar[r]^-{\coprod(f\times[n_j])}
&\d\coprod_{j=1}^s\coprod_{i=1}^r[m_in_j]
}
\]
inducing the functor composite
\[
\xymatrix@C+25pt{
G(m'n')\ar[r]^-{G([m']\times g)}
&\d\prod_{j=1}^sG(m'n_j)\ar[r]^-{\prod(G(f\times[n_j]))}
&\d\prod_{j=1}^s\prod_{i=1}^rG(m_in_j).
}
\]
But because $G$ is a multifunctor, and therefore commutes with permutations in 
the ``source,'' (actually the target of the functors, since we're working in $\Cat^\op$),
this is all actually equivalent up to reordering the factors in the products to
\[
G(f\times g):G(m'n')\to\prod_{(i,j)\in\u r\times\u s}G(m_in_j).
\]

Next, we need to compose the morphisms
\[
G(f\times[n_j])(\phi_{m',n_j}(x',\xi_j))
=\br{\phi_{m_i,n_j}((F_1f)(x'),\xi_j)}_{i=1}^r,
\]
whose components we can identify and rewrite as
\begin{gather*}
G(f\times[n_j])_i(\phi_{m',n_j}(x',\xi_j))
:G(f\times[n_j])_i(\phi_{m',n_j}(x',y_j))
\\\to G(f\times[n_j])_i(\phi_{m',n_j}(x',(F_2g)(y')_j))
\\=\phi_{m_i,n_j}((F_1f)(x')_i,(F_2g)(y')_j),
\end{gather*}
with the morphisms
\[
\phi_{m_i,n_j}(\psi_i,y_j):\phi_{m_i,n_j}(x_i,y_j)\to G(f\times[n_j])_i(\phi_{m',n_j}(x',y_j)).
\]
But as before, we have
\[
G(f\times[n_j])_i(\phi_{m',n_j}(x',\xi_j))
=\phi_{m_i,n_j}((F_1f)(x')_i,\xi_j),
\]
so the composites we need to form end up being
\[
\phi_{m_i,n_j}((F_1f)(x')_i,\xi_j)\circ\phi_{m_i,n_j}(\psi_i,y_j)
=\phi_{m_i,n_j}(\psi_i,\xi_j),
\]
which is independent of the priority order of the indices.  We therefore get the same result on composing
in the other order, establishing the bilinearity in $\N\wr G$.

\section{The Free Permutative Category on a Multicategory}\label{FPC}

In this section we prove Theorem \ref{step3}: the left adjoint to the forgetful functor from permutative categories to 
multicategories is actually a multifunctor. 
\ignore
First, however, we observe that the construction of Mandell's inverse $K$-theory functor doesn't involve the actual
left adjoint to forgetting from permutative categories to based multicategories, but rather the composite of the forgetful
functor from based to unbased multicategories, followed by the left adjoint to the forgetful functor from permutative 
categories to unbased multicategories, whose construction was given in Section 1.  Since based multilinear maps
are in particular multilinear when forgetting about the basepoint structure, the forgetful functor from based 
multicategories to unbased ones is a multifunctor.  It remains to show that the left adjoint from $\Mult$ to $\Strict$
is also a multifunctor.
\endignore

We begin by describing the forgetful functor and its left adjoint $F$.
Given a permutative category $\C$, its underlying multicategory
has the same objects, and an $r$-morphism $(a_1,\dots,a_r)\to b$ consists of a morphism in $\C$
\[
f:a_1\oplus\cdots\oplus a_r\to b.
\]
If $r=0$, we consider an empty sum to be given by the identity object of the permutative category.
Composition is given by taking sums of sums, and composing within $\C$.  The $\Sigma_n$-actions are induced
from the transposition isomorphism in the permutative structure.

The left adjoint to this construction is as follows.  Given a multicategory $M$, we construct a permutative category
$FM$ by first specifying its objects to be
\[
\Ob(FM):=\coprod_{n=0}^\infty\left(\Ob M\right)^n,
\]
so the objects of $FM$ consists of lists of objects of $M$, including an empty list, which gives the identity object.
Given a source string $\br{x_i}_{i=1}^r=(x_1,\dots,x_r)$ and a target string $\br{y_j}_{j=1}^s=(y_1,\dots,y_s)$, we define
a morphism $\br{x_i}_{i=1}^r\to\br{y_j}_{j=1}^s$ to consist of a function $\phi:\{1,\dots,r\}\to\{1,\dots,s\}$ and, for each $j$ with $1\le j\le s$,
a morphism $\psi_j:\br{x_i}_{\phi(i)=j}\to y_j$ in $M$, where $\br{x_i}_{\phi(i)=j}$ is the tuple of entries in $\br{x_i}_{i=1}^r$ whose indices get mapped to $j$.  If there are no such $i$, then $\br{x_i}_{\phi(i)=j}$ is the empty list, and $\psi_j$ is a 0-morphism in $M$.  The permutative structure is
given by concatenation of lists.  
We remark that multifunctors are sent to \emph{strict} maps of permutative categories by this construction.

Exploiting Lemma \ref{unbased}, we begin by describing 
a bilinear map $\lambda:(FM,FN)\to F(M\otimes N)$ for multicategories $M$ and $N$, where $M\otimes N$
is the tensor product of multicategories originally due to Boardman and Vogt, and described in detail in \cite{EM2}.
In particular, the objects of $M\otimes N$ consist of $\Ob(M)\times\Ob(N)$,
with a typical object written $x\otimes y$,
 and the morphisms of $M\otimes N$
are generated by those of the form $x\otimes\psi$ or $\phi\otimes y$, where $x$ is an object of $M$, 
$\psi$ is a morphism of $N$, $\phi$ is a morphism of $M$, and
$y$ is an
object of $N$; these are the induced morphisms
from the universal bilinear map $(M,N)\to M\otimes N$ of multicategories.
See \cite{EM2}, Construction 4.10 and Proposition 4.16.

To give the bilinear map $\lambda:(FM,FN)\to F(M\otimes N)$ of permutative categories, we must first give
a map on objects $\Ob(FM)\times\Ob(FN)\to\Ob(F(M\otimes N))$.  We do so by assigning
\[
\lambda(\br{x_i}_{i=1}^r,\br{y_j}_{j=1}^s):=\br{\br{x_i\otimes y_j}_{i=1}^r}_{j=1}^s,
\]
with the indices prioritized as written.  Notice that if $r=0$ or $s=0$, then the result is the empty list in $F(M\otimes N)$,
which is the unit object, as required for a bilinear map of permutative categories.  

This assignment must give us a bifunctor on the underlying categories, so suppose given a morphism
$(f,\br{\psi_k}_{k=1}^t):\br{x_i}_{i=1}^s\to\br{z_k}_{k=1}^t$ in $FM$, so $f:[r]=\{1,\dots,r\}\to\{1,\dots,t\}=[t]$ and 
for each $k\in[t]$, we have $\psi_k:\br{x_i}_{f(i)=k}\to z_k$ in $M$, and similarly suppose given
$(g,\br{\xi_q}_{q=1}^p):\br{y_j}_{j=1}^s\to\br{w_q}_{q=1}^p$ in $FN$; we must produce an induced morphism
\[
\br{\br{x_i\otimes y_j}_{i=1}^r}_{j=1}^s\to\br{\br{z_k\otimes w_q}_{k=1}^t}_{q=1}^p.
\]
In order to do so, we first use the product map
\[
f\times g:[rs]\cong[r]\times[s]\to[t]\times[p]\cong[tp],
\]
where the bijections are given by lexicographic order.
We then observe that for $(k,q)\in[t]\times[p]$, we have $(f\times g)^{-1}(k,q)=f^{-1}(k)\times g^{-1}(q)$, so
\[
\br{x_i\otimes y_j}_{(f\times g)(i,j)=(k,q)}
=\br{\br{x_i\otimes y_j}_{f(i)=k}}_{g(j)=q}.
\]
The required map $\br{x_i\otimes y_j}_{(f\times g)(i,j)=(k,q)}\to z_k\otimes w_q$ is then given by
either way of traversing the bilinearity rectangle
\[
\xymatrix@C+10pt{
\br{\br{x_i\otimes y_j}_{f(i)=k}}_{g(j)=q}\ar@{<->}[d]_-{\cong}\ar[r]^-{\br{\phi_k\otimes y_j}}
&\br{z_k\otimes y_j}_{g(j)=q}\ar[dd]^-{z_k\otimes \xi_q}
\\\br{\br{x_i\otimes y_j}_{g(j)=q}}_{f(i)=k}\ar[d]^-{\br{x_i\otimes\xi_q}}
\\\br{x_i\otimes w_q}_{f(i)=k}\ar[r]_-{\phi_k\otimes w_q}
&z_k\otimes w_q.
}
\]
This gives us the required bifunctor underlying our bilinear map.

We also require distributivity maps 
\[
\delta_1:\lambda(\br{x_i}_{i=1}^r,\br{y_j}_{j=1}^s)\odot\lambda(\br{z_k}_{k=1}^t,\br{y_j}_{j=1}^s)
\to\lambda(\br{x_i}_{i=1}^s\odot\br{z_k}_{k=1}^t,\br{y_j}_{j=1}^s)
\]
and
\[
\delta_2:\lambda(\br{x_i}_{i=1}^r,\br{y_j}_{j=1}^s)\odot\lambda(\br{x_i}_{i=1}^r,\br{w_q}_{q=1}^p)
\to\lambda(\br{x_i}_{i=1}^r,\br{y_j}_{j=1}^s\odot\br{w_q}_{q=1}^p)
\]
subject to the coherence conditions of \cite{EM1}, Definition 3.2.  For $\delta_1$, we expand the source
and obtain
\begin{gather*}
\lambda(\br{x_i}_{i=1}^r,\br{y_j}_{j=1}^s)\odot\lambda(\br{z_k}_{k=1}^t,\br{y_j}_{j=1}^s)
\\=\br{\br{x_i\otimes y_j}_{i=1}^r}_{j=1}^s\odot\br{\br{z_k\otimes y_j}_{k=1}^t}_{j=1}^s,
\end{gather*}
while expanding the target gives us
\begin{gather*}
\lambda(\br{x_i}_{i=1}^s\odot\br{z_k}_{k=1}^t,\br{y_j}_{j=1}^s)
\\=\br{\br{x_i\otimes y_j}_{i=1}^r\odot\br{z_k\otimes y_j}_{k=1}^t}_{j=1}^s.
\end{gather*}
Shuffling from one side to the other gives us a well-defined element of $\Sigma_{s(r+t)}$, which we
adopt as our definition of $\delta_1$.

For $\delta_2$, expanding the source gives us
\begin{gather*}
\lambda(\br{x_i}_{i=1}^r,\br{y_j}_{j=1}^s)\odot\lambda(\br{x_i}_{i=1}^r,\br{w_q}_{q=1}^p)
\\=\br{\br{x_i\otimes y_j}_{i=1}^r}_{j=1}^s\odot\br{\br{x_i\otimes w_q}_{i=1}^r}_{q=1}^p,
\end{gather*}
while expanding the target gives us
\begin{gather*}
\lambda(\br{x_i}_{i=1}^r,\br{y_j}_{j=1}^s\odot\br{w_q}_{q=1}^p)
\\=\br{\br{x_i\otimes y_j}_{i=1}^r}_{j=1}^s\odot\br{\br{x_i\otimes w_q}_{i=1}^r}_{q=1}^p,
\end{gather*}
which is exactly the same thing.  We therefore use the identity for $\delta_2$, and the coherence relations
for just $\delta_2$ follow immediately.  The other coherence relations involving $\delta_1$ and both $\delta_1$
and $\delta_2$ follow from the fact that they all involve a well-defined shuffling of terms from one side to the other.
We have therefore constructed the desired bilinear $\lambda:(FM,FN)\to F(M\otimes N)$.

\ignore
Applying this construction to the universal bilinear map $(M,N)\to M\otimes N$ in $\Mult$, we recall 
from \cite{EM2}, Corollary 4.16, 
that
$M\otimes N$ has objects $(\Ob M)\times(\Ob N)$,
with objects generically denoted $m\otimes n$, 
and morphisms generated by those of the form
$\phi\otimes n$ and $m\otimes\psi$, where $\phi$ is an arbitrary morphism in $M$ and $\psi$ is an
arbitrary morphism in $N$.  Our desired bilinear map $\lambda:(FM,FN)\to F(M\otimes N)$ then sends
$(\br{x_i}_{i=1}^r,\br{y_j}_{j=1}^s)$ to $\br{\br{x_i\otimes y_j}_{i=1}^r}_{j=1}^s$, with the construction on
morphisms following the construction given above.  
\endignore

Now the objects of both $((M\otimes N)\otimes P)$ and $(M\otimes(N\otimes P))$ take the form
of $x\otimes y\otimes z$, with the only difference being the insertion of parentheses, and similarly
with the generating morphisms, which are of the form $\phi\otimes y\otimes z$, $x\otimes\psi\otimes z$,
or $x\otimes y\otimes\xi$.  The associativity coherence diagram for $\lambda$ now follows by inspection.
So do the consistency with transposition, and the naturality diagram.  
\ignore
The other two diagrams have
to do with basepoint preservation, which is not present in this context, so we don't have a based
multifunctor, nor do we need one.  
\endignore
Theorem \ref{step3} therefore follows, and this concludes the
proof that Mandell's construction is multiplicative.

\end{document}